\documentclass[dvi2pdf,epstopdf,12pt,a4paper,twoside]{amsart}

\usepackage{amsfonts}
\usepackage{amsmath}
\usepackage{amssymb}
\usepackage{amsthm}
\usepackage{caligr}
\usepackage{dsfont}
\usepackage{empheq}
\usepackage{enumerate}
\usepackage{enumitem}
\usepackage[multiple,symbol,stable]{footmisc}
\usepackage[hmargin={30mm,25mm},vmargin={30mm,30mm},includefoot]{geometry}
\usepackage{graphics} 
\usepackage{graphicx}
\usepackage[colorlinks]{hyperref}
\hypersetup{linkcolor=black, citecolor=black, urlcolor=black}
\usepackage{indentfirst}
\usepackage{latexsym}
\usepackage{parskip}
\usepackage{tikz}
\usetikzlibrary{positioning,fit,calc}
\setlength{\parskip}{12pt}
\usepackage[all]{xy}

\setlength{\parindent}{24pt}
\setlength{\parskip}{5pt plus 2pt minus 1pt}

\numberwithin{equation}{section}

\makeatletter
\renewcommand{\subsection}{\@startsection
{subsection}{2}{0mm}{\baselineskip}{-0.25cm}
{\normalfont\normalsize\em}}
\makeatother


\makeatletter
\def\gaps{\mathop{\operator@font Gaps}\nolimits}
\def\:={\mathrel{\mathop:}=}
\def\=:{=\mathrel{\mathop:}}
\makeatother

\def\neg1{\text{\boldmath$1$}}

\def\cV{\mathcal V}

\def\NN{\mathds{N}}
\def\NNo{\mathds{N}_0}
\def\ZZ{\mathds{Z}}

\def\ss{\caligr S}

\def\vv{\caligr V}

\newtheorem{theorem}{Theorem}[section]
\newtheorem{proposition}[theorem]{Proposition}
\newtheorem{corollary}[theorem]{Corollary}
\newtheorem{lemma}[theorem]{Lemma}

{\theoremstyle{definition}
\newtheorem{definition}[theorem]{Definition}
\newtheorem{example}[theorem]{Example}

}

\theoremstyle{remark}


\let\emptyset\varnothing

\makeatletter
\def\moverlay{\mathpalette\mov@rlay}
\def\mov@rlay#1#2{\leavevmode\vtop{%
   \baselineskip\z@skip \lineskiplimit-\maxdimen
   \ialign{\hfil$\m@th#1##$\hfil\cr#2\crcr}}}
\newcommand{\charfusion}[3][\mathord]{
    #1{\ifx#1\mathop\vphantom{#2}\fi
        \mathpalette\mov@rlay{#2\cr#3}
      }
    \ifx#1\mathop\expandafter\displaylimits\fi}
\makeatother

\newcommand{\cupdot}{\charfusion[\mathbin]{\cup}{\cdot}}
\newcommand{\bigcupdot}{\charfusion[\mathop]{\bigcup}{\cdot}}
\newcommand{\va}[1]{\left|\hspace{.02in}#1\hspace{.02in}\right|}

\makeatletter
\renewcommand*\env@matrix[1][*\c@MaxMatrixCols c]{%
  \hskip -\arraycolsep
  \let\@ifnextchar\new@ifnextchar
  \array{#1}}
\makeatother


    \begin{document}

\author[G. Tizziotti]{G. Tizziotti}
\author[J. Villanueva]{J. Villanueva}


\thanks{{\em Keywords}: numerical semigroup, Arf numerical semigroup, Frobenius variety, sparse numerical semigroup}



    \title[On $\kappa$-Sparse Numerical Semigroups]{On $\kappa$-Sparse Numerical Semigroups}

\address{Faculdade de Matem\'atica, Universidade Federal de Uberl\^andia, Av. Jo\~ao Naves de \'Avila 2160, 38408-100, Uberl\^andia - MG, Brasil}


\address{Instituto de Ci\^encias Exatas e da Terra, Campus Universit\'ario do Araguaia,  Universidade Federal de Mato Grosso, Rodovia MT-100, Km 3,5, 78698-000, Pontal do Araguaia - MT, Brasil}

\email{guilherme@famat.ufu.br}
\email{vz\_juan@yahoo.com.br}

    \begin{abstract}
Given a positive integer $\kappa$, we investigate the class of numerical semigroups verifying the property that every two subsequent non gaps, smaller than the conductor, are spaced by at least $\kappa$. These semigroups will be called {\em $\kappa$-sparse} and generalize the concept of sparse numerical semigroups.
    \end{abstract}

\maketitle

    \section*{Introduction}

Let $\ZZ$ be the set of integers numbers and $\NNo$ be the set of non-negative integers. A subset $H=\big\{0=n_{0}(H)<n_{1}(H)<\cdots\big\}$ of $\NNo$ is a {\em numerical semigroup} if its is closed respect to addition and its complement in $\NNo$ is finite. The numerical semigroup $\NNo$ is called {\em trivial numerical semigroup}. The cardinality $g=g(H)$ of the set $\NNo\setminus H$ is is called {\em genus} of the numerical semigroup $H$. Note that $g(H)=0$ if and only if $H=\NNo$. If $g>0$ the elements the complement $\NNo\setminus H$ are called {\em gaps} and the set of gaps will be denoted by $\gaps(H)$. The smallest integer $c=c(H)$ such that $c+h\in H$, for all $h\in \NNo$ is called the {\em conductor} of $H$. The least positive integer $n_{1}(H)\in H$ is called the {\em multiplicity} of $H$. As $\NNo\setminus H$ is finite, the set $\ZZ\setminus H$ has a maximum, which is called {\em Frobenius number} and will be denoted by $\ell_{g}(H)$. A property known of this number is that $\ell_{g}(H)\leq 2g-1$, see \cite{oliveira}. In particular, $H=\NNo$ if and only if $-1$ is the Frobenius number of $H$. At consequence of this fact, from now on we use the notation $\ell_{0}=\ell_{0}(H)\:=-1$, for all numerical semigroup $H$. When $g>0,$ we denote $\gaps(H)=\big\{1=\ell_{1}(H)<\cdots<\ell_{g}(H)\big\}$. So, $c=\ell_{g}(H)+1$ and is clear that $c=n_{c-g}(H)$. For simplicity of notation we shall write $\ell_{i}$ for $\ell_{i}(H)$ and $n_{k}$ for $n_{k}(H)$, for all integers $i,k$ such that $0\leq i\leq g$ and $k\geq0$, when there is no danger of confusion.

Let $H=\{0=n_{0}<n_{1}<\cdots\}$ be a numerical semigroup of genus $g>0$ with $\gaps(H)=\{\ell_{1}<\cdots<\ell_{g}\}$. $H$ is said to be {\em sparse numerical semigroup} if
   \begin{equation}\label{eq1.1}
\ell_{i}-\ell_{i-1}\leq 2,
   \end{equation}
for all integer $i$ such that $1\leq i\leq g$, or equivalently $n_{i}-n_{i-1}\geq 2$, for all integer $i$ such that $1\leq i\leq c-g$. The concept of a sparse numerical semigroup was introduced in \cite{MTV} and \cite{JV}. For convenience, we consider the numerical semigroup $\NNo$ as sparse.
For each non-negative integer $g$, let
    $$
\NN_{g}\:=\{0\}\cup\{n\in\NN:\,n\geq g+1\}
    $$
(in the case $g=0$, it notation is itself the $\NNo$). It is clear that $\NN_{g}$ is a numerical semigroup of genus $g$. The semigroup $\NN_{g}$ is called {\em ordinary numerical semigroup} and is a canonical examples of sparse numerical semigroups. In this work, we introduce and investigate the class of numerical semigroups verifying the property that every two subsequent non gaps, smaller than the conductor, are spaced by at least $\kappa$, where $\kappa$ is a positive integer. These semigroups will be called {\em $\kappa$-sparse} and generalize the concept of sparse numerical semigroups.

This paper is organized as follows. The Arf numerical semigroups was introduced in \cite{arf} and since then several equivalent properties for these numerical semigroups have appeared. In the Section 1, we present one of these properties, given in \cite{cfm}, and prove that it is equivalent to a characterization given in \cite{barucci}. The Section 2, contains the concept of leaps on a numerical semigroup which serves as the basis for introducing the concept of $\kappa$-sparse numerical semigroup that will be presented in the Section 3. Finally, in the Section 4, we will present results on the structures of $\kappa$-sparse numerical semigroups.

    \section{Arf Numerical Semigroups}\label{s1}

    \begin{definition} \label{def2.1}
A semigroup $H=\{0=n_{0}<n_{1}<\cdots\}$ is called {\em Arf numerical semigroup} if
   \begin{equation}\label{eq2.1}
n_{i}+n_{j}-n_{k}\in H,
   \end{equation}
for all integers $i,j,k$ such that $0\leq k\leq j\leq i$.

That is equivalent to $n_{i}+n_{j}-n_{k}\in H$, for all integers $i,j,k$ such that $0\leq k\leq j\leq i\leq c-g$.
\end{definition}
Examples of Arf numerical semigroups are ordinary numerical semigroups. Relevant examples of sparse numerical semigroups are Arf numerical semigroups.

For more details about Arf semigroups see e.g. \cite{barucci} and \cite{rosales}. The Arf property (\ref{eq2.1}) is equivalent to: $2n_{i}-n_{j}\in H$, for all integers $i,j$ such that $0\leq j\leq i$ (cf. \cite{cfm}). In \cite{barucci}, there is fifteen alternative characterizations of Arf numerical semigroups which are distinct of the given in the Definition \ref{def2.1}. The goal of this section is to prove that the property of Arf given in Equation \ref{eq2.1} is equivalent to the characterization given in the item (xv)\footnote{Theorem I.3.4 (xv) : $H(n_{k})$ is stable ideal$,$ for all integer $k$ such that $1\leq k\leq c-g.$} of \cite[Theorem I.3.4]{barucci}. Before describe of this proof we recall some basics concepts, more details can be found in \cite{barucci}. For two sets $E$ and $F$ contained in $\ZZ$ and $z\in\ZZ$, let
    $$
E+F\:=\{e+f:\,e\in E,f\in F\}\quad\text{and}\quad E+z\:=\{e+z:\,e\in E\}.
    $$
Let $H$ be a semigroup numerical. A {\em relative ideal} of $H$ is a nonempty subset $E$ of $\ZZ$ such that $E+H\subset E$ and $E+h\subset H$, for some $h\in H$. A relative ideal of $H$ which is contained in $H$ is simply called an {\em ideal} of $H$. A {\em proper ideal} is an ideal distinct from $H$, that is, an ideal that does not contain zero. By a {\em principal ideal} of $H$, we mean relative ideal $E$ such that $E=H+z$, for some $z\in\ZZ$. In this case, $E$ is called an {\em relative ideal principal of $H$ generated by $z$}.

If $E$ and $F$ are relative ideal of $H$ then $(E-F)\:=\{z\in\ZZ:\,F+z\subset E\}$ is also a relative ideal of $H$. For every relative ideal $E$ of $H$, $(E-E)$ is a numerical semigroup.

A ideal $E$ of $H$ is called {\em stable} if $E$ is a principal ideal of $(E-E)$.

Let $H=\{0=n_{0}<n_{1}<\cdots\}$ be a numerical semigroup. For each $i\in\NNo$, let
    $$
H(n_{i})=\{h\in H:\,h\geq n_{i}\}.
    $$
Is clear that $H(n_{i})$ is an ideal of $H$, for all $i\in\NNo$. If $i\neq0$ then $H(n_{i})$ is an proper ideal of $H$.

    \begin{theorem}\label{thm2.2}
Let $H=\{0=n_{0}<n_{1}<\cdots\}$ be a numerical semigroup of genus $g$ and conductor $c.$ The following statements are equivalent$:$
    \begin{enumerate}[label=$(\arabic*)$,leftmargin=*,align=left,start=1,itemsep=0pt]
\item $H$ is an Arf numerical semigroup$;$
\item $H(n_{k})$ is stable ideal$,$ for all $k\in\NN;$
\item $H(n_{k})$ is stable ideal$,$ for all integer $k$ such that $1\leq k\leq c-g.$
    \end{enumerate}
    \end{theorem}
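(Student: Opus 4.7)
The plan is to establish the three conditions via the cycle $(1) \Rightarrow (2) \Rightarrow (3) \Rightarrow (1)$. The implication $(2) \Rightarrow (3)$ is immediate, since $(3)$ is merely the restriction of $(2)$ to the finite range $1 \le k \le c-g$.

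The heart of the argument is a clean unfolding of what it means for $E := H(n_k)$ to be stable. Stability says that $E = z + (E-E)$ for some $z \in \ZZ$. Since $(E-E)$ is a numerical semigroup, its minimum is $0$ (indeed if $z<0$ then $n_k + z \notin E$, so $z \notin E-E$), while $\min E = n_k$; this forces $z = n_k$. Hence stability of $H(n_k)$ is equivalent to the equality $H(n_k) - n_k = H(n_k) - H(n_k)$. One inclusion is free: if $z \in H(n_k) - H(n_k)$, then $n_k + z \in H(n_k)$, so $z \in H(n_k) - n_k$. For the reverse inclusion, I need that for every $n_i, n_j \in H(n_k)$, the element $n_i + n_j - n_k$ lies in $H(n_k)$; since it is automatically $\ge n_k$, this amounts to the assertion $n_i + n_j - n_k \in H$ for all $i, j \ge k$. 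This is exactly the Arf condition at level $k$, so iterating over $k \ge 1$ gives $(1) \Rightarrow (2)$.

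For $(3) \Rightarrow (1)$, I verify the Arf condition $n_i + n_j - n_k \in H$ for all $0 \le k \le j \le i$ by splitting on $k$. If $k = 0$, it reduces to $n_i + n_j \in H$, which is closure under addition. If $1 \le k \le c-g$, then stability of $H(n_k)$ together with the equivalence established above (applied to $n_i, n_j \in H(n_k)$) gives $n_i + n_j - n_k \in H(n_k) \subseteq H$. Finally, if $k > c-g$, then $n_k \ge c$ and hence $n_i + n_j - n_k \ge n_j \ge n_k \ge c$, so the element belongs to $H$ automatically.

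The main technical point I anticipate is correctly pinning down that the generator in the principal representation $E = z + (E-E)$ must equal $n_k$; this small observation is what ties stability cleanly to the Arf relation $n_i + n_j - n_k \in H$. Once that is secured, the rest is bookkeeping with the colon ideal $(E-F)$, together with the trichotomy on $k$ in $(3) \Rightarrow (1)$ that bridges the gap between the finite-range hypothesis of $(3)$ and the unbounded quantifier in the Arf definition.
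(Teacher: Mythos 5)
Your proof is correct and follows essentially the same route as the paper: establish that the generator $z$ in $H(n_k)=z+\bigl(H(n_k)-H(n_k)\bigr)$ must equal $n_k$, and then translate stability directly into the relation $n_i+n_j-n_k\in H$ for $i,j\geq k$. The only (minor) divergence is in $(3)\Rightarrow(1)$: the paper quietly invokes the remark after Definition~\ref{def2.1} that the Arf condition need only be checked for indices up to $c-g$, whereas you dispose of the case $k>c-g$ directly via $n_i+n_j-n_k\geq n_j\geq c$, which is a small self-contained improvement.
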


    \begin{proof}
We prove that $(1)\Rightarrow(2)\Rightarrow(3)\Rightarrow(1)$. Initially suppose that $H$ be a Arf numerical semigroup and let $k$ be a positive integer.  We must show that $H(n_{k})=\big(H(n_{k})-H(n_{k})\big)+n_{k}$. Since $n_{k}\in H(n_{k})$, it is clear that $\big(H(n_{k})-H(n_{k})\big)+n_{k} \subset H(n_{k})$. Now, let $h \in H(n_{k})$. So, $h=n_{j}$ for some $j\geq k$. Since $n_{j}=(n_{j}-n_{k})+n_{k}$, in order to show $H(n_{k})\subset\big(H(n_{k})-H(n_{k})\big)+n_{k}$ is enough prove that  $n_{j}-n_{k}\in\big(H(n_{k})-H(n_{k})\big)$. Since $H$ is Arf, we have that $n_{i}+n_{j}-n_{k}\in H$, for all $i,j \geq k$. Thus, $n_{i}+n_{j}-n_{k}\in H(n_{k})$, for all integer $i$ such that $i\geq k$. Therefore, $H(n_{k})+(n_{j}-n_{k})\subset H(n_{k})$ and follows that $n_{j}-n_{k}\in\big(H(n_{k})-H(n_{k})\big)$. Thus, we have $(1)\Rightarrow(2)$. The implication $(2)\Rightarrow(3)$ is trivial.

Now, suppose $(3)$. Let $i,j,k$ integers such that $0\leq k\leq j\leq i\leq c-g$. If $k=0$ is clear that $n_{i}+n_{j}-n_{k}\in H$. So, let $k\geq1$ and suppose that $H(n_{k})$ is stable. Then,  $H(n_{k})=\big(H(n_{k})-H(n_{k})\big)+z$, for some $z\in\ZZ$. Note that $z\in H(n_{k})$, since $0\in\big(H(n_{k})-H(n_{k})\big)$. Also, $n_{k}=x+z$, for some $x\in\big(H(n_{k})-H(n_{k})\big)$. Then, $n_{k}=x+z\geq x+n_{k}\geq n_{k}$. Therefore, $x=0$ and we have that $H(n_{k})=\big(H(n_{k})-H(n_{k})\big)+n_{k}$. Let $i,j,k$ be integers such that $1 \leq k \leq j \leq i$. Since $n_{j}\in H(n_{k})$, we have that $n_{j}=y+n_{k}$, for some $y\in\big(H(n_{k})-H(n_{k})\big)$. Consequently, $n_{i}+n_{j}-n_{k}=n_{i}+y\in H(n_{i})+y\subset H(n_{k})+y\subset H(n_{k})\subset H$ and thus $(3)\Rightarrow(1)$.
   \end{proof}

    \section{The sets of leaps}\label{s2}

Let $H$ be a numerical semigroup of genus $g>0$ and $\gaps(H)=\{1=\ell_{1}<\cdots<\ell_{g}\}$. Remembering that $\ell_{0}=-1$, for each $1\leq i\leq g$, the ordered pair $(\ell_{i-1},\ell_{i})$ will be called {\em leap on $H$} (or simply {\em leap}). The set of leaps on $H$ will be denoted by
    $$
\cV=\cV(H)\:=\big\{(\ell_{i-1},\ell_{i}):\,1\leq i\leq g\big\}.
    $$
Note that $\va{\cV}=g$.

In \cite{contiero-gugu-paula}, the authors work with the ordered pairs $(\ell_{i-1},\ell_{i})$ such that $\ell_{i}-\ell_{i-1}=1$ and $\ell_{i}-\ell_{i-1}=2$, for sparse numerical semigroups of genus $g \geq 2$ and $2 \leq i \leq g$, not considering the element $\ell_{0}=-1$. These pairs were called {\em single leap} and {\em double leap}, respectively.

Based on this idea, for a positive integer $m$, if $\ell_{i}-\ell_{i-1}=m$ the ordered pair $(\ell_{i-1},\ell_{i})$ will be called {\em $m$-leap on $H$} (or simply {\em $m$-leap}), for any numerical semigroups with genus $g>0$ and $i=1,\dots,g$. If $m=1$, the $1$-leap on $H$ is called a {\em single leap on $H$} (or simply {\em single leap}). If $m=2$, the $2$-leap on $H$ is called a {\em double leap on $H$} (or simply {\em double leap}). For a positive integer $m$, the set of $m$-leaps will be denoted by
    $$
\cV_{m}=\cV_{m}(H)\:=\big\{(\ell_{i-1},\ell_{i}):\,\ell_{i}-\ell_{i-1}=m,\;1\leq i\leq g\big\}.
    $$
For convenience, we define $\cV_{m}(\NNo)\:=\emptyset$, for all positive integer $m$. To simplify the notation we will denote the cardinality of the set $\cV_{m} (H)$ by $v_{m}(H)$, that is, $v_{m}=v_{m}(H)\:=\va{\cV_{m}(H)}$.

From definition the sets $\cV$ and $\cV_{m}$'s, for all numerical semigroup $H$, we get the disjoint union
    $$
\cV(H)=\bigcupdot_{m\in\NN}\cV_{m}(H).
    $$

Observe that, if $H$ is a numerical semigroup of genus $g>0, then$ $\cV(H)$ is a finite union of non-empty disjoint sets$.$

Before of the next result, let us remember that a numerical semigroup $H$ is called {\em hyperelliptic} if $2 \in H$.

    \begin{theorem}\label{thm3.1}
Let $H$ be a numerical semigroup of genus $g>0.$ Then$,$ $\cV$ is a finite
union of non-empty sets mutually disjoint$.$ In particular$,$ $\sum\limits_{m\in\NN}v_{m}=g.$ More precisely$,$
    \begin{enumerate}[label=$(\arabic*)$,leftmargin=*,align=left,start=1,itemsep=0pt]
\item\label{k1} $H$ is hyperelliptic if and only if $v_{1}=0.$ In this case$,$ $v_{2}=g$ and $v_{m}=0,$ for all integer $m$ such that $m\geq3;$
\item\label{k2} $H$ is non-hyperelliptic if and only if $v_{1}\neq0.$ In this case$,$ $v_{g+m}=0,$ for all positive integer $m.$
    \end{enumerate}
    \end{theorem}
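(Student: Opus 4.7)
The plan is to prove the three assertions in the order stated. The partition statement and the identity $\sum_{m\in\NN} v_{m}=g$ follow immediately from the definitions: every leap $(\ell_{i-1},\ell_{i})$ belongs to exactly one $\cV_{m}$, namely $m=\ell_{i}-\ell_{i-1}$, so the non-empty members of the family $\{\cV_{m}(H):m\in\NN\}$ form a partition of $\cV(H)$. Since $|\cV(H)|=g$ is finite, only finitely many $\cV_{m}$ can be non-empty, and their cardinalities add to $g$.

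For item \ref{k1}, I would handle the two directions separately. The direction ``hyperelliptic $\Rightarrow v_{1}=0$, $v_{2}=g$, $v_{m}=0$ for $m\geq 3$'' follows by noting that if $2\in H$, then closure under addition yields $2k\in H$ for every $k\in\NNo$, forcing every gap to be odd. Combining this with $\ell_{g}\leq 2g-1$ and with the fact that $\gaps(H)$ has exactly $g$ elements, one concludes $\gaps(H)=\{1,3,\ldots,2g-1\}$; then every difference $\ell_{i}-\ell_{i-1}$, including $\ell_{1}-\ell_{0}=1-(-1)=2$, equals $2$. Conversely, if $v_{1}=0$, then $(\ell_{1},\ell_{2})$ is not a $1$-leap, so $\ell_{2}\neq 2$; since $\ell_{1}=1$ and the gaps are strictly increasing, this forces $2\in H$.

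For item \ref{k2}, the biconditional ``non-hyperelliptic $\Leftrightarrow v_{1}\neq 0$'' is just the contrapositive of the biconditional in \ref{k1}. The substantive content is that $v_{g+m}=0$ for every positive integer $m$. I would argue by contradiction: the non-hyperelliptic hypothesis forces $g\geq 2$ (the only semigroups of genus $0$ and $1$ being $\NNo$ and $\{0,2,3,\ldots\}$, both hyperelliptic) and makes $(\ell_{0},\ell_{1})=(-1,1)$ a $2$-leap. Assuming some leap has size at least $g+1\geq 3$, it is distinct from this initial $2$-leap, and summing the sizes of all $g$ leaps yields
\[
\ell_{g}+1=\sum_{i=1}^{g}(\ell_{i}-\ell_{i-1})\geq (g+1)+2+(g-2)=2g+1,
\]
which contradicts $\ell_{g}\leq 2g-1$.

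The only delicate step is this last one, where one must combine three ingredients at once: the Frobenius bound $\ell_{g}\leq 2g-1$ recalled in the introduction, the fact that $(\ell_{0},\ell_{1})$ is always a $2$-leap (so it cannot double as the hypothetical large leap when $g\geq 2$), and the observation that the non-hyperelliptic hypothesis guarantees $g\geq 2$. Each ingredient is routine on its own, but the bookkeeping must be handled carefully so that the strict inequality $2g+1>2g$ actually appears.
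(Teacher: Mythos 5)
Your proof is correct, and for the substantive part of the theorem it takes a genuinely different (and more self-contained) route than the paper. For item (1) the paper simply cites Oliveira's characterization $\ell_{i}=2i-1$ of hyperelliptic semigroups, whereas you re-derive the gap set $\{1,3,\dots,2g-1\}$ from $2\in H$ together with the Frobenius bound $\ell_{g}\leq 2g-1$; the converse direction is the same short contradiction in both texts (and in both it is phrased as if $g\geq 2$, the case $g=1$ being harmless since $\NN_{1}$ contains $2$). The real divergence is in item (2): the paper invokes the per-index bounds $\ell_{i}\leq 2i-2$ for $2\leq i\leq g-1$ from Oliveira's Theorem 1.1(ii) and estimates each leap separately (with a separate case for $H=\NN_{2}$, where it incidentally misstates $\cV=\cV_{1}$, overlooking the initial $2$-leap $(\ell_{0},\ell_{1})$), while you use only the telescoping identity $\sum_{i=1}^{g}(\ell_{i}-\ell_{i-1})=\ell_{g}+1\leq 2g$ combined with the facts that $(\ell_{0},\ell_{1})$ is always a $2$-leap and that non-hyperellipticity forces $g\geq 2$. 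Your global counting argument needs nothing beyond the Frobenius bound already quoted in the introduction, avoids the case split, and delivers exactly the required conclusion $v_{g+m}=0$; the paper's route, by leaning on the finer per-index inequalities, gives slightly sharper bounds on individual leap sizes for $i<g$, but that extra precision is not needed for the statement.
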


    \begin{proof}
Let $H$ be a numerical semigroup of genus $g>0$ with $\gaps(H)=\{\ell_{1}<\dots<\ell_{g}\}$.

Let us prove  the item \ref{k1}. By \cite[Theorem 1.1 (i)]{oliveira}, $H$ is hyperelliptic if and only if $\ell_{i}=2i-1$, for all integer $i$ such that $1\leq i\leq g$. So, if $H$ is hyperelliptic then $v_{1}=0$. Reciprocally, if $v_{1}=0$ then $2\in H$, otherwise we will have $\ell_{1}=1$ and $\ell_{2}=2$, and so $(\ell_{1},\ell_{2})$ is a single leap, contradiction. Moreover, since $\ell_{i}=2i-1$, for all integer $i$ such that $1\leq i\leq g$, follows that $\cV_{2}=\cV$ and $\cV_{m}=\emptyset$, for all integer $m$ such that $m\geq3$ and the assertion \ref{k1} follows.

The first assertion in the item \ref{k2}, follows directly from the item \ref{k1}, that is, $H$ is non-hyperelliptic if and only if $v_{1}\neq0$. Now, we prove the last assertion. By \cite[Theorem 1.1 (ii)]{oliveira}, $H$ is non-hyperelliptic if and only if $H=\NN_{2}$ or $g\geq3$ and $\ell_{i}\leq 2i-2$, for all integer $i$ such that $2\leq i\leq g-1$, and $\ell_{g}\leq 2g-1$. Thus, if $H=\NN_{2}$ then $g=2$ and we have $\cV=\big\{(\ell_{1},\ell_{2})\big\}=\cV_{1}$. So, $\cV_{2+m}=\emptyset$, for all positive integer $m$. On the other hand, if $g\geq 3$ then $\ell_{i}-\ell_{i-1}\leq(2i-1)-(i-1)\leq g-2$, for all integer $i$ such that $2\leq i\leq g-1$, and $\ell_{g}-\ell_{g-1}\leq(2g-1)-(g-1)=g$. This implies that $\cV_{g+m}=\emptyset$, for all positive integer $m$, and the proof is complete.
    \end{proof}

    \begin{proposition}\label{prop3.2}
Let $H$ be a numerical semigroup of genus $g.$ Then$,$
    \begin{enumerate}[label=$(\arabic*)$,ref=$(\arabic*)$,leftmargin=*,align=left,start=1,itemsep=0pt]
\item\label{l1} $H=\NNo$ if and only if $v_{2}(H)=0;$
\item\label{l2} $v_{2}(H)\neq 0$ if and only if $1\not\in H;$
\item\label{l3} for $g>0,$ $H=\NN_{g}$ if and only if $v_{1}(H)=g-1$ and $v_{2}(H)=1.$ In this case$,$ $v_{m}=0,$ for all positive integer $m\geq3;$
\item\label{l4} for all positive integer $\kappa,$ $\sum\limits_{m=1}^{\kappa}v_{m}(H)\leq g.$
    \end{enumerate}
    \end{proposition}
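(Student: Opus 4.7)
The proof relies on two simple ingredients: Theorem \ref{thm3.1}, which gives the disjoint decomposition $\cV(H)=\bigcupdot_{m\in\NN}\cV_{m}(H)$ with $\sum_{m\in\NN}v_{m}(H)=g$, and the observation that when $g>0$ the first gap is $\ell_{1}=1$, so that the pair $(\ell_{0},\ell_{1})=(-1,1)$ is automatically a $2$-leap on $H$. The latter immediately yields item \ref{l1}: the forward direction is the convention $\cV_{m}(\NNo)=\emptyset$, and for the reverse, $g>0$ gives $v_{2}(H)\geq 1$, so $v_{2}(H)=0$ forces $g=0$ and hence $H=\NNo$. Item \ref{l2} is then a direct reformulation, since $1\in H$ if and only if $H=\NNo$, and by \ref{l1} this is equivalent to $v_{2}(H)=0$.

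For \ref{l3}, the forward direction is a direct computation: if $H=\NN_{g}$ with $g>0$, then $\gaps(H)=\{1,2,\dots,g\}$, so $\ell_{i}=i$ for $1\leq i\leq g$, giving the single $2$-leap $(-1,1)$ and the $g-1$ single leaps $(i-1,i)$ for $2\leq i\leq g$. For the converse, I assume $v_{1}(H)=g-1$ and $v_{2}(H)=1$; Theorem \ref{thm3.1} then forces $v_{m}(H)=0$ for every $m\geq 3$. The leap $(\ell_{0},\ell_{1})$ has difference $\ell_{1}+1\geq 2$ and cannot belong to any $\cV_{m}$ with $m\geq 3$; combined with $v_{2}=1$, this pins down $\ell_{1}=1$ and identifies $(\ell_{0},\ell_{1})$ as the unique $2$-leap. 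All remaining leaps must then be $1$-leaps, so $\ell_{i}-\ell_{i-1}=1$ for $2\leq i\leq g$, and a trivial induction gives $\ell_{i}=i$ for $1\leq i\leq g$, whence $H=\NN_{g}$. The claim that $v_{m}=0$ for $m\geq 3$ is part of this argument, so it is recorded at the end of the statement.

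Item \ref{l4} is immediate: since every $v_{m}(H)$ is a non-negative integer with $\sum_{m\in\NN}v_{m}(H)=g$ by Theorem \ref{thm3.1}, any partial sum is bounded above by $g$. The only step that requires any care is the converse of \ref{l3}, where one must exclude the possibility $\ell_{1}\geq 2$ by noting that it would place $(\ell_{0},\ell_{1})$ in some $\cV_{m}$ with $m\geq 3$, contradicting $v_{m}(H)=0$ for such $m$; once this is done, the rest is a bookkeeping argument on the decomposition of $\cV(H)$.
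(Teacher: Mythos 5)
Your proposal is correct and follows essentially the same route as the paper: the forward directions by direct computation, the converses via the disjoint decomposition $\cV(H)=\bigcupdot_{m}\cV_{m}(H)$ with $\sum_{m}v_{m}=g$, and the key observation that $(\ell_{0},\ell_{1})=(-1,1)$ is a $2$-leap whenever $g>0$. The only cosmetic difference is in the converse of \ref{l3}, where you re-derive $\ell_{1}=1$ from the vanishing of $v_{m}$ for $m\geq 3$ rather than quoting the standard fact that $1$ is always the first gap; both are valid and the paper's case split at $g=1$ is likewise unnecessary in your uniform version.
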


    \begin{proof}
To prove \ref{l1}, by definition, we have that $\cV_{m} (\NNo)=\emptyset$, for all positive integer $m$. So, $H=\NNo$ implies that $v_{2}(H)=0$. Now, suppose that $v_{2}(H)=0$. Then, $1\in H$ and follows that $H=\NNo$. The item \ref{l2}, follows directly from the previous item.

For the item \ref{l3}, it is clear that if $H=\NN_{g}$, then $\ell_{i}=i$, for all integer $i$ such that $1\leq i\leq g$. So, $v_{1}(H)=g-1$ and $v_{2}(H)=1$. Reciprocally, suppose $v_{1}(H)=g-1$ and $v_{2}(H)=1$. If $g=1$, we have that $v_{1}(H)=0$ and, by item (1) from the Theorem \ref{thm3.1}, follows that $H=\NN_{1}$. Then, suppose $g\geq 2$. Since $|\cV(H)|=g\geq2$ and $v_1(H)=g-1$, $(\ell_{0},\ell_{1})$ is the unique double leap. So, $\cV_{1}(H)=\cV (H)\setminus\big\{(\ell_{0},\ell_{1})\big\}$ and $\ell_{i}-\ell_{i-1}=1$, for all integer $i$ such that $2\leq i\leq g$. Therefore, we conclude that $\ell_{i}=i$, for all integer $i$ such that $1\leq i\leq g$, and we have $H=\NN_{g}$.

The item \ref{l4} is trivial, since $\bigcupdot\limits_{m=1}^{\kappa}\cV_{m}(H)\subset\cV(H)$, for all positive integer $\kappa$.
\end{proof}

    \begin{theorem}\label{thm3.3}
Let $H$ be a numerical semigroup of genus $g.$
    \begin{enumerate}[label=$(\arabic*)$,ref=$(\arabic*)$,leftmargin=*,align=left,start=1,itemsep=0pt]
\item\label{sparse1} $H$ is a sparse numerical semigroup if and only if $v_{1}(H)+v_{2}(H)=g.$ In this case$,$ $v_{m}(H)=0,$ for all positive integer $m\geq3.$
\item\label{sparse2} If $H$ is a sparse numerical semigroup$,$ then the Frobenius number of $H$ is $\ell_{g}=v_{1}(H)+2v_{2}(H)-1.$
\item\label{sparse3} If $g>0$ and $\ell_{g}(H)=2g-K,$ for some positive integer $K,$ then $H$ is a sparse numerical semigroup if and only if $v_{1}(H)=K-1$ and $v_{2}(H)=g-K+1.$
    \end{enumerate}
    \end{theorem}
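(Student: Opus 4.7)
The plan is to derive all three statements from the decomposition $\cV(H) = \bigcupdot_{m \in \NN} \cV_m(H)$ established in Section 2, together with a simple telescoping argument for the Frobenius number. No step is conceptually hard; the main obstacle is just careful bookkeeping, in particular remembering the convention $\ell_0 = -1$.

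For part \ref{sparse1}, I would start from the defining condition of a sparse numerical semigroup, namely $\ell_i - \ell_{i-1} \leq 2$ for all $1 \leq i \leq g$. Since each leap $(\ell_{i-1},\ell_i)$ lies in a unique $\cV_m(H)$, this condition is equivalent to asserting that every leap of $H$ belongs to $\cV_1(H) \cupdot \cV_2(H)$, i.e.\ $\cV(H) = \cV_1(H) \cupdot \cV_2(H)$. Taking cardinalities and using $|\cV(H)| = g$ yields $v_1(H) + v_2(H) = g$. Conversely, the equality $v_1(H) + v_2(H) = g$ combined with the identity $\sum_{m\in\NN} v_m(H) = g$ from Theorem \ref{thm3.1} forces $v_m(H) = 0$ for every $m \geq 3$, hence every leap is a single or double leap, and $H$ is sparse. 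The convention that $\NNo$ is sparse fits trivially, since in that case $g = 0 = v_1 + v_2$.

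For part \ref{sparse2}, assuming $H$ sparse, I would telescope:
\[
\ell_g = \ell_0 + \sum_{i=1}^{g}(\ell_i - \ell_{i-1}) = -1 + \sum_{i=1}^{g}(\ell_i - \ell_{i-1}).
\]
By part \ref{sparse1} each summand is either $1$ or $2$, with exactly $v_1(H)$ contributions equal to $1$ and exactly $v_2(H)$ contributions equal to $2$. Hence $\ell_g = v_1(H) + 2v_2(H) - 1$. The only subtlety is the use of $\ell_0 = -1$ set up in the introduction, which is what produces the $-1$ in the closed-form expression.

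For part \ref{sparse3}, I would simply combine \ref{sparse1} and \ref{sparse2}. If $H$ is sparse with $\ell_g = 2g - K$, substituting into the formula from \ref{sparse2} and using $v_1(H) + v_2(H) = g$ gives
\[
2g - K = v_1(H) + 2v_2(H) - 1 = g + v_2(H) - 1,
\]
so that $v_2(H) = g - K + 1$ and consequently $v_1(H) = g - v_2(H) = K - 1$. Conversely, if $v_1(H) = K-1$ and $v_2(H) = g - K + 1$, then $v_1(H) + v_2(H) = g$ and part \ref{sparse1} immediately yields that $H$ is sparse; a second appeal to \ref{sparse2} recovers $\ell_g = (K-1) + 2(g-K+1) - 1 = 2g - K$, which is consistent with the hypothesis.
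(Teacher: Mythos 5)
Your proposal is correct and follows essentially the same route as the paper: part (1) via the disjoint decomposition of $\cV(H)$ and a cardinality count, part (2) via telescoping $\ell_g-\ell_0=\sum_{i=1}^{g}(\ell_i-\ell_{i-1})$ with the convention $\ell_0=-1$, and part (3) by combining the first two. The only difference is cosmetic: your uniform telescoping handles all cases at once, whereas the paper separately treats the trivial, hyperelliptic and non-hyperelliptic cases before running the same computation.
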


    \begin{proof}
To prove \ref{sparse1} first, suppose that $H$ is a sparse numerical semigroup. Then, $\ell_{i}-\ell_{i-1}\leq 2$, for all $1\leq i\leq g$. Since $\va{\cV(H)}=g$, we have that $v_{1}(H)+v_{2}(H)=g$. The converse implication follows directly from $\cV_{1}(H)\cupdot\cV_{2}(H)\subset\cV(H)$ and $\va{\cV(H)}=g$.

Now, we prove \ref{sparse2}. If $H=\NNo$, then $g=0$ and $v_{1}(H)=v_{2}(H)=0$, and the equality is true since the Frobenius number of $H=\NNo$ is $\ell_{0}=-1$. Suppose that $g>0$. If $H$ is hyperelliptic, by Theorem \ref{thm3.1} item (1), we have that $v_{1}(H)=0$ and $v_{2}(H)=g$. So, the equality follows, since $\ell_{g}=2g-1$. If $H$ is non-hyperelliptic, by Theorem \ref{thm3.1} item (2), $\cV_{1}(H)\neq\emptyset$. If $\gaps(H)=\{\ell_{1}<\cdots<\ell_{g}\}$, since $H$ is sparse, by Theorem \ref{thm3.3} item (1), we have that
    $$
\sum_{i=1}^{g}(\ell_{i}-\ell_{i-1})=\sum_{(\ell_{i-1},\ell_{i})\in\cV_{1}(H)}(\ell_{i}-\ell_{i-1})+\sum_{(\ell_{i-1},\ell_{i})\in\cV_{2}(H)}(\ell_{i}-\ell_{i-1}).
    $$
Note that $\ell_{g}-\ell_{0}=\sum\limits_{i=1}^{g}(\ell_{i}-\ell_{i-1})$. By the definitions of $\cV_1(H)$ and $\cV_2(H)$, follows that $\sum\limits_{(\ell_{i-1},\ell_{i})\in\cV_1}(\ell_{i}-\ell_{i-1})=v_1(H)$ and $\sum\limits_{(\ell_{i-1},\ell_{i})\in\cV_2}(\ell_{i}-\ell_{i-1})=2v_2(H)$. Therefore, $\ell_{g}-\ell_{0}=v_1(H)+2v_2(H)$ by proving the result.

Finally, we prove the item \ref{sparse3}. Suppose that $g>0$ and $\ell_{g}(H)=2g-K$. If $H$ is a sparse numerical semigroup, by the item \ref{sparse1}, we have that $v_{1}(H)+v_{2}(H)=g$ and, by the item \ref{sparse2}, $\ell_{g}=v_{1}(H)+2v_{2}(H)-1$. So, $2g-K=g+v_{2}(H)-1$ implies $v_{2}(H)=g-K+1$ and, using the item \ref{sparse1} again, $v_{1}(H)=g-1$. The converse implication follows directly by the item \ref{sparse1} above.
    \end{proof}

    \section{$\kappa$-Sparse Numerical Semigroup}\label{s3}

Let $H$ be a numerical semigroup of genus $g$. The Theorem \ref{thm3.1} shows that $\cV_{g+m}=\emptyset$, for all integer $m$ such that $m\geq2$, and
    $$
\sum_{m=1}^{g+1}v_{m}(H)=g.
    $$
The item (1) in Theorem \ref{thm3.3} motivates us to define a new class of numerical semigroups, which generalize the sparse numerical semigroups.

    \begin{definition}\label{def4.1}
Let $\kappa$ be an integer positive. A numerical semigroup $H$ of genus $g$ is called {\em $\kappa$-sparse numerical semigroup} if
    $$
\sum_{m=1}^{\kappa}v_{m}(H)=g.
    $$
    \end{definition}

Note that if $H$ is a $\kappa$-sparse numerical semigroup, then $v_{k+j}(H)=0$, for all positive integer $j$. In particular, $\cV(H)=\bigcupdot\limits_{m=1}^{\kappa}\cV_{m}(H)$.

For $\kappa \geq 2$, a $\kappa$-sparse numerical semigroup $H$ with $v_{\kappa}(H) \neq 0$ will be called \emph{pure $\kappa$-sparse numerical semigroup}.

By the definition above, $\NNo$ is a $\kappa$-sparse numerical semigroup, for all positive integer $\kappa$, and the sparse numerical semigroups are the $2$-sparse numerical semigroups. Also, the pure $2$-sparse numerical semigroups are the sparse numerical semigroups with genus positive.

    \begin{example}\label{ex4.2}
For each non-negative integer $g$, the ordinary numerical semigroup $\NN_{g}$ is a $\kappa$-sparse numerical semigroup, for all integer $\kappa$ such that $\kappa\geq2$.
    \end{example}

The definition of $\kappa$-sparse condition is equivalent a another properties as follows, which is our main result.

    \begin{theorem}\label{thm4.3}
Let $\kappa$ be an integer such that $\kappa\geq2.$ Let $H=\{0=n_{0}<n_{1}<\cdots\}$ be a numerical semigroup of genus $g>0$ and $\gaps(H)=\{1=\ell_{1}<\cdots<\ell_{g}\},$ with conductor $c.$ The following statements are equivalent$:$
    \begin{enumerate}[label=$(\arabic*)$,leftmargin=*,align=left,start=1,itemsep=0pt]
\item\label{s1} $H$ is be a $\kappa$-sparse numerical semigroups$;$
\item\label{s2} $\ell_{i}-\ell_{i-1}\leq\kappa,$ for all integer $i$ such that $1\leq i\leq g;$
\item\label{s3} $n_{i+\kappa-2}-n_{i-1}\geq\kappa,$ for all integer $i$ such that $1\leq i\leq c-g-\kappa+2;$
\item\label{s4} if $i$ is a positive integer such that $n_{i},n_{i}+1,\dots,n_{i}+\kappa-1\in H,$ then $n_{i}\geq c$ $($that is$,$ $i\geq c-g).$
    \end{enumerate}
    \end{theorem}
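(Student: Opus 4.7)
My plan is to establish $(1)\Leftrightarrow(2)$ at once from the leap decomposition of Section 2 and then close the cycle $(2)\Rightarrow(3)\Rightarrow(4)\Rightarrow(2)$, arguing each implication by contraposition.

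The equivalence $(1)\Leftrightarrow(2)$ is formal: the disjoint union $\cV(H)=\bigcupdot_{m\in\NN}\cV_m(H)$ together with $\va{\cV(H)}=g$ shows that $\sum_{m=1}^{\kappa}v_m(H)=g$ holds if and only if $v_m(H)=0$ for every $m>\kappa$, and the latter says exactly that no leap of $H$ has length exceeding $\kappa$.

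For the remaining implications the main idea is to translate between the gap-side condition (2) and the non-gap-side conditions (3), (4). A violation of (3) forces $n_{i-1},n_{i-1}+1,\dots,n_{i-1}+\kappa-1$ to be $\kappa$ consecutive non-gaps; since $\kappa\geq 2$ and $c-1=\ell_{g}$ is a gap, this block in fact lies in $[n_{i-1},c-2]$, and bracketing it by the nearest gaps $\ell_{s}<\ell_{s+1}$ (with $\ell_{s}=\ell_{0}=-1$ allowed, and $\ell_{s+1}$ existing because $\ell_{g}\geq n_{i-1}+\kappa$) produces $\ell_{s+1}-\ell_{s}\geq\kappa+1$, contradicting (2). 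For $(3)\Rightarrow(4)$, if $n_{i}<c$ with $n_{i},\dots,n_{i}+\kappa-1\in H$, then $\ell_{g}=c-1$ cannot lie in this block, so $n_{i}+\kappa-1<c-1$; hence $i+\kappa-1<c-g$ and $n_{i+\kappa-1}-n_{i}=\kappa-1$ violates (3) at index $i+1$. Finally, a leap $\ell_{i}-\ell_{i-1}\geq\kappa+1$ produces $\kappa$ consecutive non-gaps $\ell_{i-1}+1,\dots,\ell_{i-1}+\kappa$ bounded above by $\ell_{i}-1\leq c-2$; these equal $n_{j},\dots,n_{j}+\kappa-1$ for some $j\geq 1$, directly contradicting (4).

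The delicate points are the boundary checks: one needs $\kappa\geq 2$ together with the identity $c-1=\ell_{g}$ to ensure that the offending block of $\kappa$ consecutive non-gaps sits inside $[0,c-2]$ in $(2)\Rightarrow(3)$, and one needs $\ell_{1}-\ell_{0}=2\leq\kappa$ to guarantee that the bad leap index satisfies $i\geq 2$, hence $\ell_{i-1}\geq 1$, hence $j\geq 1$, in $(4)\Rightarrow(2)$.
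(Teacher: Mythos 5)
Your proof is correct and follows essentially the same route as the paper's: the same cycle of implications, driven by translating a leap of length $>\kappa$ into a block of $\kappa$ consecutive non-gaps below the conductor and vice versa, with the same boundary checks via $c-1=\ell_{g}$ and $\ell_{1}-\ell_{0}=2\leq\kappa$. The only (harmless) deviations are that in $(2)\Rightarrow(3)$ you bracket the offending block by the nearest gaps directly instead of the paper's maximal-index selection, and you close the cycle at $(2)$ rather than $(1)$, which is equivalent since $(1)\Leftrightarrow(2)$ is formal.
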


    \begin{proof}
We prove that $(1)\Rightarrow(2)\Rightarrow(3)\Rightarrow(4)\Rightarrow(1)$. Initially suppose that $H$ be a $\kappa$-sparse numerical semigroup. Then, $\cV(H)=\bigcupdot\limits_{m=1}^{\kappa}\cV_{m}(H)$. So, given $i\in\{1,\dots,g\}$, there exist $m\in\{1,\dots,\kappa\}$ such that $(\ell_{i-1},\ell_{i})\in\cV_{m}(H)$. Thus, $\ell_{i}-\ell_{i-1}=m\leq\kappa$ and we have $(1)\Rightarrow(2)$.

Now, suppose $(2)$. First, if $c=g+\kappa-1$, then $c-g-\kappa+2=1$ and we have that $n_{\kappa-1}-n_{0}=n_{\kappa-1}\geq\kappa$. Now, let $c\geq g+\kappa$ and suppose that exist $i\in\{1,2,\dots,c-g-\kappa+2\}$ such that $n_{i+\kappa-2}-n_{i-1}<\kappa$. Since $n_{i+\kappa-2}-n_{i-1}\geq(i+\kappa-2)-(i-1)=\kappa-1$, follows that $n_{i+\kappa-2}-n_{i-1}=\kappa-1$. Note that on the interval $[n_{i-1},n_{i+\kappa-2}]$ exists exactly $\kappa$ consecutive numbers and thus $n_{i+s-1}=n_{i-1}+s$, for all integer $s$ such that $1\leq s\leq\kappa-1$. So, $i<c-g-\kappa+2$, since $n_{c-g}=c$. Choice the largest index $i$ with that property. Then, $n_{i+\kappa-2}+1$ is a gap of $H$. That is, there exist $j\in\{1,\dots,g\}$ such that $\ell_{j}=n_{i+\kappa-2}+1$. Therefore, $\ell_{j-1}<n_{i-1}$ and so $\ell_{j}-\ell_{j-1}>(n_{i+\kappa-2}+1)-n_{i-1}=\kappa$, a contradiction. Thus, we have that $(2)\Rightarrow(3)$.

To prove the implication $(3)\Rightarrow(4)$, suppose there exist a positive integer $i < c-g$ such that $n_{i},n_{i}+1,\dots,n_{i}+\kappa-1\in H$. Note that $n_{i+\kappa-1}=n_{i}+\kappa-1$ and $i \leq c-g +\kappa +1$. Then, we have $1\leq i \leq c-g-\kappa+1$ and $n_{i+\kappa-1}-n_{i}=(n_{i}+\kappa-1)-n_{i}=\kappa-1$, a contradiction with the item $(3)$.

Finally, we proof the implication $(4)\Rightarrow(1)$. Suppose that there is a number $i\in\{1,\dots,g\}$ such that $(\ell_{i-1},\ell_{i})\in \cV(H)\setminus\bigcupdot\limits_{m=1}^{\kappa}\cV_{m}(H)$. So, $\ell_{i}-\ell_{i-1}>\kappa$. Note that $i \geq 2$, since $\kappa \geq 2$ and $g>0$. Then, $\ell_{i}-\kappa,\ell_{i}-\kappa+1,\dots,\ell_{i}-1$ are $\kappa$ consecutive non-gaps with $\ell_{i}-\kappa<\ell_{i}<c$, a contradiction with the item (4).  Therefore, $\cV(H)=\bigcupdot\limits_{m=1}^{\kappa}\cV_{m}(H)$ and we have that $H$ is a $\kappa$-sparse numerical semigroup.
    \end{proof}

    \begin{corollary}\label{crl4.4}
Let $\kappa$ be an integer such that $\kappa\geq3.$ Let $H$ be a numerical semigroup with conductor $c.$ Then$,$ $H$ is a pure $\kappa$-sparse numerical semigroup if and only if $H$ is a $\kappa$-sparse numerical semigroup and there is a positive integer $i$ such that $n_{i},n_{i}+1,\dots,n_{i}+\kappa-2\in H,$ with $n_{i}<c.$
    \end{corollary}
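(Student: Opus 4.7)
The plan is to prove the two implications separately, using Theorem \ref{thm4.3} (conditions \ref{s2} and \ref{s4}) and the defining observation that $v_\kappa(H)\neq 0$ means there exists an index $j$ with $\ell_j-\ell_{j-1}=\kappa$.

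For the forward direction, I would assume $H$ is pure $\kappa$-sparse and pick a $\kappa$-leap $(\ell_{j-1},\ell_j)\in\cV_\kappa(H)$. The key preliminary observation is that $\ell_1-\ell_0=1-(-1)=2<\kappa$, since $\kappa\geq 3$, which forces $j\geq 2$ and hence $\ell_{j-1}\geq \ell_1=1$. The $\kappa-1$ integers $\ell_{j-1}+1,\ell_{j-1}+2,\dots,\ell_{j-1}+\kappa-1=\ell_j-1$ are then all non-gaps, so by writing $n_i\:=\ell_{j-1}+1$ (where $i$ is the appropriate index in the enumeration of non-gaps) we get $n_i\geq 2>0$, so $i$ is positive, and $n_i+\kappa-2=\ell_j-1<\ell_g<c$, giving $n_i<c$ and $n_i,n_i+1,\dots,n_i+\kappa-2\in H$.

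For the backward direction, suppose $H$ is $\kappa$-sparse and there is a positive $i$ with $n_i,n_i+1,\dots,n_i+\kappa-2\in H$ and $n_i<c$. Since $n_i<c$, the contrapositive of Theorem \ref{thm4.3}\ref{s4} forbids the existence of $\kappa$ consecutive non-gaps starting at $n_i$, so $n_i+\kappa-1\notin H$; i.e., $n_i+\kappa-1$ is a gap, say $n_i+\kappa-1=\ell_j$ for some $1\leq j\leq g$. Because $n_i,\dots,n_i+\kappa-2$ are all non-gaps, the largest gap strictly below $\ell_j$ satisfies $\ell_{j-1}\leq n_i-1$, whence $\ell_j-\ell_{j-1}\geq\kappa$. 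On the other hand, by Theorem \ref{thm4.3}\ref{s2} the $\kappa$-sparse hypothesis gives $\ell_j-\ell_{j-1}\leq\kappa$. Combining, $\ell_j-\ell_{j-1}=\kappa$, so $(\ell_{j-1},\ell_j)\in\cV_\kappa(H)$ and $v_\kappa(H)\neq 0$, i.e., $H$ is pure $\kappa$-sparse.

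The only subtle point is that both directions rely crucially on $\kappa\geq 3$: in the forward direction it rules out $j=1$ (where $n_i$ would be $n_0=0$, failing positivity), and implicitly in the backward direction it ensures the statement is non-vacuous. I expect no real obstacle; the proof is essentially a careful translation between the leap language of Section 2 and the non-gap-window language of Theorem \ref{thm4.3}\ref{s4}, with a squeeze argument providing the equality $\ell_j-\ell_{j-1}=\kappa$.
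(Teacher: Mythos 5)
Your proof is correct and follows essentially the same route as the paper: both directions translate between the $\kappa$-leap language and the window of $\kappa-1$ consecutive non-gaps via Theorem \ref{thm4.3}. The only (cosmetic) difference is in the converse, where the paper applies Theorem \ref{thm4.3}(4) to conclude directly that both $n_i-1$ and $n_i+\kappa-1$ are gaps, while you obtain $n_i+\kappa-1\notin H$ from item (4) and then squeeze $\ell_j-\ell_{j-1}=\kappa$ using item (2); your version is, if anything, slightly more careful about the positivity of the index $i$ in the forward direction.
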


    \begin{proof}
Initially suppose that $H$ is a pure $\kappa$-sparse numerical semigroup. Thus, $H$ is $\kappa$-sparse numerical semigroup and there is an integer $j\in \{1,\ldots,g\}$ such that $\ell_{j}-\ell_{j-1}=\kappa$. So, there is $i$ such that $n_{i},n_{i}+1,\dots,n_{i}+\kappa-2\in[\ell_{i-1},\ell_{i}] \cap H$ and, since $\ell_{j}<c$, we have that $n_{i}<c$.

Reciprocally, suppose that $H$ is a $\kappa$-sparse and there is a positive integer $i$ such that $n_{i},n_{i}+1,\dots,n_{i}+\kappa-2\in H$, with $n_{i}< c$. By Theorem \ref{thm4.3} item (iv), follows that $n_{i}-1,n_{i}+\kappa-1\notin H$, that is, $(n_{i}-1,n_{i}+\kappa-1)\in\cV_{\kappa}(H)$ and follows that $H$ is a pure $\kappa$-sparse numerical semigroup.
    \end{proof}


    \begin{example}\label{ex4.5}
Let $\kappa$ be an integer such that $\kappa\geq3$. For an integer $a$ such that $a\geq\kappa$, considerer the numerical semigroup $H=\{0=n_{0}<n_{1}<\cdots\}$ of genus $g=2a-\kappa,$ where $n_{1}=a$ and $n_{\kappa}=2a$. Then, $H$ is be a pure $\kappa$-sparse numerical semigroup if and only if $H=\{a,a+1,\dots,a+\kappa-2\}\cup\NN_{2a-1}$.

In fact, suppose that $H$ is be a pure $\kappa$-sparse numerical semigroup. Since on the interval $[n_{1},n_{\kappa}]=[a,2a]$ has $a+1-\kappa$ gaps and $g=2a-\kappa$, follows that $2a+j\in H$, for all $j\in\NNo$. Thus, the conductor $c$ of $H$  is such that  $c\leq2a$. So, by Corollary \ref{crl4.4}, follows that $n_{i+1}=n_{1}+i$, for all integer $i$ such that $1\leq i\leq\kappa-2$. Then, $H=\{a,a+1,\dots,a+\kappa-2\}\cup\NN_{2a-1}$.

Now, suppose that $H=\{a,a+1,\dots,a+\kappa-2\}\cup\NN_{2a-1}$. Thus, if $\gaps(H)=\{\ell_{1}<\cdots<\ell_{g}\}$, then $\ell_{i}=i$ and $\ell_{j}=j+\kappa-1$, for all integers $i,j$ such that $1\leq i\leq a-1$ and $a\leq j\leq g=2a-\kappa$. So, $\ell_{i}-\ell_{i-1}=1$, for all integer $i\in\{2,\dots,g\}\setminus \{a\}$, and $\ell_{a}-\ell_{a-1}=\kappa$. Therefore, $H$ is be a pure $\kappa$-sparse numerical semigroup.
    \end{example}

    \section{Structure of $\kappa$-Sparse Numerical Semigroups}\label{s4}

For each positive integer $\kappa,$ let $\ss_{\kappa}$ be the collection of $\kappa$-sparse numerical semigroups and let $\ss_{\kappa}^{\;\ast}$ be the collection of pure $\kappa$-sparse numerical semigroups, where, for $\kappa=1$, we will denote $\ss_{1}^{\;\ast}=\ss_{1}$.

    \begin{lemma}\label{lem5.1}
\hfill
    \begin{enumerate}[label=$(\arabic*)$,leftmargin=*,align=left,start=1,itemsep=0pt]
\item $\ss_{\kappa}=\{\NNo\}$ if and only if $\kappa=1;$
\item $\ss_{\kappa}=\{\text{$H:$\,$H$ is be a sparse numerical semigroup}\}$ if and only if $\kappa=2.$
    \end{enumerate}
    \end{lemma}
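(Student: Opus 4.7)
The plan is to prove both items by mining the characterizations already established in Theorem \ref{thm3.3} and Example \ref{ex4.5}, and by exploiting the fact that in every numerical semigroup of positive genus the initial leap $(\ell_0,\ell_1)=(-1,1)$ is intrinsically a $2$-leap.

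For item (1), the forward direction reduces to showing that no semigroup with $g>0$ can be $1$-sparse. I would observe that for any $H$ with $g>0$ we have $\ell_1=1$ and $\ell_0=-1$ by the convention set in the introduction, so $(\ell_0,\ell_1)\in\cV_2(H)$. This forces $v_2(H)\geq 1$, hence $v_1(H)\leq g-1<g$, and therefore $H\notin\ss_1$. Combined with $\NNo\in\ss_\kappa$ for every positive integer $\kappa$, this gives $\ss_1=\{\NNo\}$. For the reverse direction, I would exhibit a witness separating $\{\NNo\}$ from $\ss_\kappa$ whenever $\kappa\geq 2$: the ordinary semigroup $\NN_1$ has $\gaps(\NN_1)=\{1\}$, and its single leap $(-1,1)$ lies in $\cV_2$, so $\NN_1\in\ss_\kappa$ for all $\kappa\geq 2$. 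Since $\NN_1\neq\NNo$, the assumption $\ss_\kappa=\{\NNo\}$ forces $\kappa=1$.

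For item (2), the forward direction is essentially immediate from Theorem \ref{thm3.3}\ref{sparse1}: the sparse numerical semigroups are exactly the $H$ with $v_1(H)+v_2(H)=g$, which is the definition of being in $\ss_2$ together with the case $H=\NNo$ which lies in every $\ss_\kappa$. For the reverse direction, I would rule out $\kappa=1$ and $\kappa\geq 3$ separately. The case $\kappa=1$ is excluded by item (1), since $\{\NNo\}$ strictly misses the sparse semigroup $\NN_1$. For $\kappa\geq 3$, Example \ref{ex4.5} (say with $a=\kappa$) produces a pure $\kappa$-sparse semigroup $H=\{\kappa,\kappa+1,\dots,2\kappa-2\}\cup\NN_{2\kappa-1}$, and by definition of pureness it satisfies $v_\kappa(H)\neq 0$. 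Hence $H\in\ss_\kappa$ but $H$ is not sparse (because it carries a $\kappa$-leap with $\kappa\geq 3$), which contradicts $\ss_\kappa$ equalling the sparse class.

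I do not expect a genuine obstacle: the whole argument is a bookkeeping exercise in the leap calculus developed in Sections 2 and 3. The single conceptual point one must not overlook is the convention $\ell_0=-1$, which is what makes the very first leap of a nontrivial semigroup automatically a $2$-leap and thereby collapses $\ss_1$ to $\{\NNo\}$; once this is internalized, producing the separating witnesses ($\NN_1$ for $\kappa\geq 2$ in item (1) and Example \ref{ex4.5} for $\kappa\geq 3$ in item (2)) is routine.
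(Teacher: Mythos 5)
Your proposal is correct and follows essentially the same route as the paper: item (1) via the observation that $\ell_0=-1$, $\ell_1=1$ forces $v_2(H)\geq 1$ whenever $g>0$ (the content of Proposition \ref{prop3.2}(1), which the paper cites), and item (2) via Theorem \ref{thm3.3}\ref{sparse1}. The only difference is that you make explicit the separating witnesses ($\NN_1$ for $\kappa\geq 2$ and the pure $\kappa$-sparse semigroup of Example \ref{ex4.5} for $\kappa\geq 3$) where the paper leaves them implicit in its terse step ``then $\kappa\leq 2$''; this is a minor improvement in rigor, not a different argument.
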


    \begin{proof}
We prove (1). First, suppose that $\ss_{\kappa}=\{\NNo\}$. If $\kappa>1$ there is $H\in\ss_{\kappa}$ such that $g(H)>0$, a contradiction. Reciprocally, suppose that $\kappa=1$ and let $H\in\ss_{1}$. Then, $v_{m}(H)=0$, for all integer $m$ such that $m\geq2$. In particular, $v_{2}(H)=0$. Therefore, by Proposition \ref{prop3.2} item (1), $H=\NNo$ and so $\ss_{1}=\{\NNo\}$.

Now, we prove (2). Suppose that $\ss_{\kappa}$ be the collection of sparse numerical semigroups. From item (1), follows that $\kappa\geq2$. By Theorem \ref{thm3.3} item \ref{sparse1}, $v_{1}(H)+v_{2}(H)=g$, for all $H\in\ss_{\kappa}$. Then, $\kappa\leq2$ and we have $\kappa=2$. Reciprocally, suppose that $\kappa=2$ and let $H\in\ss_{2}$. Then, $v_{1}(H)+v_{2}(H)=g$. So, by Theorem \ref{thm3.3} item \ref{sparse1}, $H$ is be a sparse numerical semigroup and the proof is complete.
    \end{proof}

The following result is a generalization of Theorem \ref{thm3.3}.

    \begin{theorem}\label{thm5.2}
Let $\kappa$ be an integer such that $\kappa\geq2.$ Let $H$ be a numerical semigroup of genus $g.$ Then$:$
    \begin{enumerate}[label=$(\arabic*)$,ref=$(\arabic*)$,leftmargin=*,align=left,start=1,itemsep=0pt]
\item\label{ksparse1} If $H$ is be a $\kappa$-sparse numerical semigroup$,$ then the Frobenius number of $H$ is
    $$
\ell_{g}=\sum_{m=1}^{\kappa}mv_{m}(H)-1;
    $$
\item\label{ksparse2} Let $g>0$ and $K\:=2g-\ell_{g}.$ Then$,$ $H$ is be a $\kappa$-sparse numerical semigroup if and only if
    $$
\sum_{m=1}^{\kappa}mv_{m}(H)=2g-K+1\quad\text{and}\quad\sum_{m=2}^{\kappa}(m-1)v_{m}(H)=g-K+1.
    $$
    \end{enumerate}
    \end{theorem}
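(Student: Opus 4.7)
The proof is a direct generalization of the telescoping argument used in Theorem \ref{thm3.3}\ref{sparse2}, extended to the full decomposition $\cV(H)=\bigcupdot_{m=1}^{\kappa}\cV_{m}(H)$ that holds for $\kappa$-sparse semigroups.

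For part \ref{ksparse1}, the plan is to start from the telescoping identity
$$
\ell_{g}-\ell_{0}=\sum_{i=1}^{g}(\ell_{i}-\ell_{i-1}),
$$
which, together with $\ell_{0}=-1$, gives $\ell_{g}+1=\sum_{i=1}^{g}(\ell_{i}-\ell_{i-1})$. Since $H$ is $\kappa$-sparse, by the definition (or by Theorem \ref{thm4.3}\ref{s2}) one has $\cV(H)=\bigcupdot_{m=1}^{\kappa}\cV_{m}(H)$, so the sum decomposes as
$$
\sum_{i=1}^{g}(\ell_{i}-\ell_{i-1})=\sum_{m=1}^{\kappa}\;\sum_{(\ell_{i-1},\ell_{i})\in\cV_{m}(H)}(\ell_{i}-\ell_{i-1})=\sum_{m=1}^{\kappa}m\,v_{m}(H),
$$
because each $m$-leap contributes exactly $m$ and there are $v_{m}(H)$ of them. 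Rearranging yields $\ell_{g}=\sum_{m=1}^{\kappa}m\,v_{m}(H)-1$.

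For part \ref{ksparse2}, the strategy is to combine part \ref{ksparse1} with the defining identity $\sum_{m=1}^{\kappa}v_{m}(H)=g$ of Definition \ref{def4.1}. For the direct implication, assume $H$ is $\kappa$-sparse. Then part \ref{ksparse1} gives
$$
\sum_{m=1}^{\kappa}m\,v_{m}(H)=\ell_{g}+1=2g-K+1,
$$
and subtracting $\sum_{m=1}^{\kappa}v_{m}(H)=g$ from this immediately produces
$$
\sum_{m=2}^{\kappa}(m-1)v_{m}(H)=(2g-K+1)-g=g-K+1.
$$
For the converse, assume both displayed equalities. Subtracting the second from the first telescopes back to
$$
\sum_{m=1}^{\kappa}v_{m}(H)=(2g-K+1)-(g-K+1)=g,
$$
which is exactly the defining condition for $H$ to be $\kappa$-sparse.

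There is no genuine obstacle here; the argument is purely combinatorial bookkeeping once one recognizes that the $\kappa$-sparse hypothesis is precisely what is needed to split $\cV(H)$ into the disjoint union of the $\cV_{m}(H)$ with $1\le m\le\kappa$. The only point requiring mild care is the base case $H=\NNo$ in part \ref{ksparse1} (where the sums are empty and the formula reads $-1=-1$), which is consistent with the convention $\ell_{0}=-1$ fixed in the introduction; part \ref{ksparse2} explicitly excludes this case by requiring $g>0$.
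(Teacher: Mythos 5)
Your proof is correct and follows essentially the same route as the paper: the telescoping identity $\ell_{g}-\ell_{0}=\sum_{i=1}^{g}(\ell_{i}-\ell_{i-1})$ combined with the decomposition $\cV(H)=\bigcupdot_{m=1}^{\kappa}\cV_{m}(H)$ for part (1), and the subtraction of $\sum_{m=1}^{\kappa}v_{m}(H)=g$ for part (2). The only difference is cosmetic: the paper separately treats the hyperelliptic and non-hyperelliptic cases in part (1), whereas your uniform telescoping argument (plus the empty-sum base case $H=\NNo$) covers everything at once, and you spell out the converse of part (2) that the paper dismisses as clear.
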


    \begin{proof}
We prove \ref{ksparse1}. If $\kappa=1$, by Lemma \ref{lem5.1} item (1), $H=\NNo$. Then, $g=0$ and $v_{1}(H)=0$. So, the equality is true, since the Frobenius number of $H=\NNo$ is $\ell_{0}=-1$. Suppose that $\kappa>1$ and thus $g>0$. If $H$ is hyperelliptic, by Theorem \ref{thm3.1} item (1), we have that $v_{2}(H)=g$ and $v_{m}(H)=0$, for all positive integer $m$ such that $m\neq2$. So, the equality follows, since $\ell_{g}=2g-1$ and
    $$
2g-1=2g+\sum_{\substack{{m=1}\\{m\neq 2}}}^{\kappa}mv_{m}(H)-1.
    $$
Now, if $H$ is non-hyperelliptic, by Theorem \ref{thm3.1} item (2), $\cV_{1}(H)\neq\emptyset$. Let $\gaps(H)=\{\ell_{1}<\cdots<\ell_{g}\}$. By the $\kappa$-sparse property, we have that
    $$
\sum_{i=1}^{g}(\ell_{i}-\ell_{i-1})=\sum_{\substack{{m=1}\\{\cV_{m}(H)\neq\emptyset}}}^{\kappa}\sum_{(\ell_{i-1},\ell_{i})\in\cV_{m}(H)}(\ell_{i}-\ell_{i-1}).
    $$
Since $\ell_{g}-\ell_{0}=\sum\limits_{i=1}^{g}(\ell_{i}-\ell_{i-1})$ and $\sum\limits_{(\ell_{i-1},\ell_{i})\in\cV_{m}(H)}(\ell_{i}-\ell_{i-1})=mv_{m}(H)$ (by the definition of $\cV_{m}(H)$), follows the result.

To prove \ref{ksparse2}, suppose that $g>0$ and $\ell_{g}(H)=2g-K$. If $H$ is be a $\kappa$-sparse numerical semigroup, we have that $\sum\limits_{m=1}^{\kappa}v_{m}(H)=g$ and, by the item \ref{ksparse1} above, $2g-K=\sum\limits_{m=1}^{\kappa}mv_{m}(H)-1$. So, $\sum\limits_{m=1}^{\kappa}mv_{m}(H)=2g-K+1$ and we have
    $$
\sum_{m=2}^{\kappa}(m-1)v_{m}(H)=2g-K+1-\sum_{m=1}^{\kappa}v_{m}(H)=g-K+1.
    $$
The converse implication is clear.
    \end{proof}

Let $\ss$\, be the collection of all numerical semigroups$.$

    \begin{theorem}\label{thm5.3}
\hfill
    \begin{enumerate}[label=$(\arabic*)$,leftmargin=*,align=left,start=1,itemsep=0pt]
\item The sequence $\left(\ss_{\kappa}\right)_{\kappa\in\NN}$ is an strictly ascending chain$.$

\item $\ss_{\kappa_1}^{\; \ast} \cap \ss_{\kappa_2}^{\; \ast} = \emptyset,$ for all positive integers $\kappa_{1}$ and $\kappa_{2}$ such that $\kappa_{1}\neq \kappa_{2}.$

\item $\ss= \bigcup\limits_{\kappa\in\NN}\ss_{\kappa}=\bigcupdot\limits_{\kappa\in\NN}\ss_{\kappa}^{\;\ast}.$

\item $\bigcap\limits_{\kappa\in\NN}\ss_{\kappa}=\{\NNo\}.$
    \end{enumerate}
    \end{theorem}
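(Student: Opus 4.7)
The plan is to split the theorem into its four assertions and reduce each to the decomposition $\cV(H)=\bigcupdot_{m\in\NN}\cV_m(H)$, exploiting three simple observations: the multiplicities $v_m(H)$ are non-negative integers summing to the genus $g(H)$; membership in $\ss_\kappa$ means this support lies in $\{1,\dots,\kappa\}$; and for $\kappa\geq 2$, membership in $\ss_\kappa^{\;\ast}$ is the same with the extra condition $v_\kappa(H)\neq 0$.

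For part (1), the inclusion $\ss_\kappa\subseteq\ss_{\kappa+1}$ will follow immediately from Proposition \ref{prop3.2}\ref{l4}: the identity $\sum_{m=1}^{\kappa}v_m(H)=g$, the upper bound $\sum_{m=1}^{\kappa+1}v_m(H)\leq g$, and non-negativity together force equality. For strictness, I would exhibit an element of $\ss_{\kappa+1}\setminus\ss_\kappa$: when $\kappa=1$, Lemma \ref{lem5.1}(1) lets any sparse semigroup of positive genus (e.g.\ $\NN_{1}$) do the job; when $\kappa\geq 2$, Example \ref{ex4.5} applied with parameter $\kappa+1\geq 3$ supplies a pure $(\kappa+1)$-sparse semigroup, whose nonzero $v_{\kappa+1}$ excludes it from $\ss_\kappa$. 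Part (2) will be a quick contradiction: if $H\in\ss_{\kappa_1}^{\;\ast}\cap\ss_{\kappa_2}^{\;\ast}$ with $\kappa_1<\kappa_2$, then $\kappa_1$-sparseness gives $v_m(H)=0$ for $m>\kappa_1$ (or, if $\kappa_1=1$, Lemma \ref{lem5.1}(1) forces $H=\NNo$ and hence $v_m(H)=0$ for every $m$), so $v_{\kappa_2}(H)=0$, contradicting $H\in\ss_{\kappa_2}^{\;\ast}$.

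For part (3), the containment $\bigcup_\kappa\ss_\kappa\subseteq\ss$ is tautological; for the reverse, Theorem \ref{thm3.1} shows every $H\in\ss$ of genus $g>0$ satisfies $v_{g+m}(H)=0$ for all $m\geq 1$, so $H\in\ss_{g+1}$, and the case $g=0$ is $H=\NNo\in\ss_1$. For $\ss=\bigcupdot_\kappa\ss_\kappa^{\;\ast}$, disjointness is exactly part (2); for coverage, each $H\neq\NNo$ is placed in $\ss_{\kappa_0}^{\;\ast}$ where $\kappa_0$ is the largest $m$ with $v_m(H)\neq 0$, while $\NNo\in\ss_1^{\;\ast}=\ss_1$ by convention. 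Part (4) is then immediate from Lemma \ref{lem5.1}(1): $\bigcap_\kappa\ss_\kappa\subseteq\ss_1=\{\NNo\}$, and $\NNo$ belongs to every $\ss_\kappa$ since $v_m(\NNo)=0$ for all $m$. The only non-routine input is the strict ascent in part (1), and it is entirely absorbed by Example \ref{ex4.5}, so I do not anticipate any real obstacle—the rest is straightforward bookkeeping with the $(v_m)$-decomposition.
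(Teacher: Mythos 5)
Your proposal is correct and follows essentially the same route as the paper: both obtain the inclusion $\ss_{\kappa}\subseteq\ss_{\kappa+1}$ from the $v_m$-bookkeeping (equivalently Theorem \ref{thm4.3}(2)), both use Example \ref{ex4.5} to witness strictness, and both derive (4) from (1) and Lemma \ref{lem5.1}. Your treatment of the second equality in (3) is in fact more explicit than the paper's ``obvious''; the only micro-detail worth adding there is that your $\kappa_0=\max\{m:v_m(H)\neq 0\}$ is automatically $\geq 2$ when $g>0$, by Proposition \ref{prop3.2}\ref{l2}, so the notion of pure $\kappa_0$-sparse applies.
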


    \begin{proof}
We show the item (1). If $\kappa=1$, by Lemma \ref{lem5.1}, is clear that $\ss_{1}\subsetneq\ss_{2}$. If $\kappa\geq 2$, by Theorem \ref{thm4.3}, follows that $\ss_{\kappa}\subset\ss_{\kappa+1}$ and the Example \ref{ex4.5} show that  $\ss_{\kappa}\neq\ss_{\kappa+1}$.

The item (2), follows directly from the definition of pure $\kappa$-sparse numerical semigroup.

To prove (3), note that $\bigcup\limits_{\kappa\in\NN}\ss_{\kappa}\subset\ss$. Now, let $H\in\ss$\, with genus $g$. If $g=0$, then $H=\NNo\in\ss_{1}$. In another case, if $g>0$, taking $\kappa\:=\max\{\ell_{i}-\ell_{i-1}:\,1\leq i\leq g\}$, follows that $H\in\ss_{\kappa}$. So, in any case, $\ss\,\subset\bigcup\limits_{\kappa\in\NN}\ss_{\kappa}$. The second equality is obvious.

The item (4) is trivial, by item (1) and Lemma \ref{lem5.1} item (i).
    \end{proof}




The item (3) in the previous theorem allows to define an equivalence relation $\sim$ on $\ss$ in the following way: given $H_{1},H_{2}\in\ss$,
    $$
\text{$H_{1}\sim H_{2}\;\Leftrightarrow\;$ there is $\kappa\in\NN$ such that $H_{1}$ and $H_{2}$ are in}\, \ss_{\kappa}^{\;\ast}.
    $$

Now, we are interested in the Frobenius varieties, concept introduced by J. C. Rosales in \cite{rosales3}. We will show that the families of $\kappa$-sparse numerical semigroups, for $\kappa\geq 2$, are examples of Frobenius varieties. Remember that the intersection of two numerical semigroups is a numerical semigroup and if $H$ is a numerical semigroup of genus $g>0$, then $H\cup\{\ell_{g}(H)\}$ is also a numerical semigroup (see \cite{rosales2} for details).


    \begin{definition}\label{def5.4}
A Frobenius variety is a nonempty set $\vv$\, of numerical semigroups fulfilling the following conditions:
    \begin{enumerate}[label=(\roman*),leftmargin=*,align=left,start=1,itemsep=0pt]
\item for all $H_{1},H_{2}\in\vv$\,, $H_{1}\cap H_{2}\in\vv$\,;

\item for all $H \in \vv$\, with genus $g>0$, $H\cup\{\ell_{g}(H)\}\in \vv$\,.
    \end{enumerate}
    \end{definition}

    \begin{lemma}\label{lem5.5}
Let $\kappa$ an integer such that $\kappa\geq2.$ If $H_{1},H_{2}\in\ss_{\kappa},$ then $H_{1}\cap H_{2}\in\ss_{\kappa}.$
    \end{lemma}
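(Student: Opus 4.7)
My plan is to verify characterization~(4) of Theorem~\ref{thm4.3} for $H \:= H_1 \cap H_2$, i.e.\ to show that whenever $h$ is a positive non-gap of $H$ with $h, h+1, \dots, h+\kappa-1 \in H$, one has $h \geq c(H)$. By the remark preceding the lemma, $H$ is a numerical semigroup, and the degenerate case $H = \NNo$ is handled by Example~\ref{ex4.2}, so I may assume $g(H) > 0$.

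The only preliminary observation I need is the elementary conductor bound $c(H) \leq \max\{c(H_1), c(H_2)\}$, which holds because $\max\{c(H_1), c(H_2)\} + \NNo \subseteq H_1 \cap H_2 = H$, so the definition of conductor applies. (In fact equality holds, but this one-sided bound is all that is required.)

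Now, given such an $h$, the inclusion $H \subseteq H_j$ shows that $h, h+1, \dots, h+\kappa-1 \in H_j$ for $j=1,2$. Applying characterization~(4) of Theorem~\ref{thm4.3} to each $H_j$ yields $h \geq c(H_j)$, and taking the maximum gives $h \geq \max\{c(H_1), c(H_2)\} \geq c(H)$. This is exactly characterization~(4) for $H$, so $H \in \ss_{\kappa}$, as required.

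I do not anticipate any serious obstacle: once the right characterization is chosen, the argument reduces to a two-line application of Theorem~\ref{thm4.3} together with a trivial conductor bound. An attempted proof via the $\ell_i$-characterization (item~(2)) or via the $\cV_m$-counting of the definition would be more awkward, since $\gaps(H) = \gaps(H_1) \cup \gaps(H_2)$ and consecutive differences of such a union do not combine cleanly; this is precisely why characterization~(4), which is a local statement about non-gaps, is the natural tool here.
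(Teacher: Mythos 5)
Your proof is correct, and it takes a genuinely different route from the paper's. The paper works with characterization (2) of Theorem~\ref{thm4.3}: it uses the identity $\gaps(H_{1}\cap H_{2})=\gaps(H_{1})\cup\gaps(H_{2})$ and then runs a case analysis on whether two consecutive gaps $\ell_{i-1},\ell_{i}$ of the intersection both lie in $\gaps(H_{r})$ for the same $r$, or in different ones, bounding $\ell_{i}-\ell_{i-1}$ in each case by comparing with consecutive gaps of one of the $H_{r}$. Your argument instead verifies characterization (4): a run $h,h+1,\dots,h+\kappa-1$ of non-gaps of $H_{1}\cap H_{2}$ is a run of non-gaps of each $H_{j}$, so $h\geq c(H_{j})$ for $j=1,2$, and the elementary bound $c(H_{1}\cap H_{2})\leq\max\{c(H_{1}),c(H_{2})\}$ finishes the proof. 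This is shorter and avoids the case analysis on which semigroup the gaps belong to (a case analysis that, as you correctly diagnose, is the awkward part of the union-of-gaps approach). The only point to tidy up: Theorem~\ref{thm4.3} is stated for genus $g>0$, so if exactly one of $H_{1},H_{2}$ equals $\NNo$ you cannot literally cite item (4) for it; but then $H_{1}\cap H_{2}$ is just the other semigroup and the claim is immediate (or, equivalently, $c(\NNo)=0$ makes the inequality $h\geq c(\NNo)$ vacuous). With that one-line remark added, your proof is complete.
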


    \begin{proof}
Let $H_{1}$ and $H_{2}$ are in $\ss_{\kappa}$, with genus $g_{1}$ and $g_{2}$, respectively. It is clear that if $g_{1}=0$ or $g_{2}=0$, then $H_{1}\cap H_{2}\in\ss_{\kappa}$.

Thus, suppose that $g_{1}>0$ and $g_{2}>0$. Let $\gaps(H_{1})=\{\ell_{1}(H_{1})<\dots<\ell_{g_{1}}(H_{1})\}$ and $\gaps(H_{2})=\{\ell_{1}(H_{2})<\dots<\ell_{g_{1}}(H_{2})\}$. If $g$ is the genus of $H_{1}\cap H_{2}$, then $\gaps(H_{1}\cap H_{2})=\{\ell_{1}<\dots<\ell_{g}\}$, where $\{\ell_{1},\dots,\ell_{g}\}=\{\ell_{1}(H_{1}),\dots,\ell_{g_{1}}(H_{1})\}\cup\{\ell_{1}(H_{2}),\dots,\ell_{g_{2}}(H_{2})\}$.

Note that, if $g=1$, we have that $g_{1}=g_{2}=1$ and then $H_{1}\cap H_{2}=\NN_{1}\in\ss_{\kappa}$. For $g>1$, let $i\in\{2,\dots,g\}$. If $\ell_{i},\ell_{i-1}\in\gaps(H_{r})$, for some $r\in\{1,2\}$, then $\ell_{i}-\ell_{i-1}\leq\kappa$. Now, if $\ell_{i}\in\gaps(H_{r})$ and $\ell_{i-1}\in\gaps(H_{s})$, for $r,s\in\{1,2\}$ with $r\neq s$, then there is $j\in\{1,\dots,g_{2}\}$ such that $\ell_{i-1}=\ell_{j}(H_{s})$, since $\ell_{i-1}\in\gaps(H_{s})$. So, if $\ell_{i}\in\gaps(H_{s})$, we have that $\ell_{i}=\ell_{j+1}(H_{s})$ and follows that $\ell_{i}-\ell_{i-1}=\ell_{j+1}(H_{s})-\ell_{j}(H_{s})\leq\kappa$. On the other hand, if $\ell_{i}\notin\gaps(H_{s})$, we have that $\ell_{i}<\ell_{j+1}(H_{s})$ and follows that $\ell_{i}-\ell_{i-1}<\ell_{j+1}(H_{s})-\ell_{j}(H_{s})\leq\kappa$. Therefore, we have that $H_{1}\cap H_{2}$ is a $\kappa$-sparse numerical semigroup, that is, $H_{1}\cap H_{2}\in\ss_{\kappa}.$
    \end{proof}

    \begin{proposition}\label{prop5.6}
Let $\kappa$ be an integer such that $\kappa\geq2.$ The set $\ss_{\kappa}$ is a Frobenius variety$.$
    \end{proposition}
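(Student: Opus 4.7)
The plan is to verify the two axioms of Definition \ref{def5.4} separately. Axiom (i), intersection-closedness, is precisely the content of Lemma \ref{lem5.5}, so there is nothing more to do there. What remains is axiom (ii): if $H \in \ss_{\kappa}$ has genus $g>0$, then $H'\:=H\cup\{\ell_{g}(H)\}$ lies in $\ss_{\kappa}$.

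For (ii), the natural tool is the gap-gap characterization from Theorem \ref{thm4.3} \ref{s2}: a numerical semigroup is $\kappa$-sparse iff consecutive gaps (with $\ell_{0}=-1$) differ by at most $\kappa$. Adjoining the Frobenius number to $H$ simply removes the largest gap, so I would split into two cases. If $g=1$, then $H'=\NNo$, which is $\kappa$-sparse by definition. If $g\geq 2$, write $\gaps(H)=\{\ell_{1}<\cdots<\ell_{g}\}$; then $\gaps(H')=\{\ell_{1}<\cdots<\ell_{g-1}\}$, and the differences $\ell_{i}-\ell_{i-1}$ for $1\leq i\leq g-1$ are exactly the first $g-1$ gap differences of $H$, all of which are at most $\kappa$ because $H\in\ss_{\kappa}$. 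Applying Theorem \ref{thm4.3} \ref{s2} in the reverse direction yields $H'\in\ss_{\kappa}$.

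There is essentially no obstacle: the difficulty has already been absorbed into Lemma \ref{lem5.5} (closure under intersection) and into the equivalence $\ref{s1}\Leftrightarrow\ref{s2}$ of Theorem \ref{thm4.3}. The only subtlety worth flagging explicitly is the edge case $g=1$, where removing the unique gap lands us at $\NNo$, which we must recall is $\kappa$-sparse for every $\kappa$ by convention stated after Definition \ref{def4.1}.
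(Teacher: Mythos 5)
Your proposal is correct and follows essentially the same route as the paper: condition (i) is delegated to Lemma \ref{lem5.5}, and condition (ii) is handled by the case split $g=1$ versus $g>1$ together with the gap-difference characterization of Theorem \ref{thm4.3}. No issues.
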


    \begin{proof}
The condition (i) in the Definition \ref{def5.4} is satisfied by the previous lemma. Now, we will show that the condition (ii) is also satisfied. In fact, let $H\in\ss_{\kappa}$ with genus $g>0$ and $\gaps(H)=\{\ell_{1}<\cdots<\ell_{g}\}$. If $g=1$, then $H\cup\{\ell_{1}\}=\NNo\in\ss_{\kappa}$. If $g>1$, then the genus of $H\cup\{\ell_{g}\}$ is equal to $g-1$ and $\gaps(H\cup\{\ell_{g}\})=\{\ell_{1}<\cdots<\ell_{g-1}\}$. So, by Theorem \ref{thm4.3} item (ii), follows that $H\cup\{\ell_{g}\}\in\ss_{\kappa}$.
    \end{proof}

The Figure \ref{fig1}, illustrates a retrospective of the $\kappa$-sparse numerical semigroups as a generalization of the sparse and Arf numerical semigroups.

  \begin{figure}[!h]
 \centering
\begin{tikzpicture}[font=\small,
rnd/.style={draw=#1,rounded corners=8pt,align=center,text width=4.4cm,minimum height=1.2cm},
ar/.style={->,>=latex},node distance=0.5cm and 3cm
]
\node[rnd,white] (1) {};
\node[rnd,below=of 1] (purek1) {Pure $(\kappa+1)$-sparse \\ numerical semigroups};
\node[rnd,below=of purek1] (purek) {Pure $\kappa$-sparse numerical \\ semigroups};

\node[rnd,right=of 1] (ns) {Numerical semigroups};
\node[rnd,below=of ns] (k1sparse){$(\kappa+1)$-sparse \\ numerical semigroups};
\node[rnd,below=of k1sparse] (ksparse) {$\kappa$-sparse numerical \\ semigroups};
\node[rnd,below=of ksparse] (sparse) {Sparse numerical \\ semigroups};
\node[rnd,below=of sparse] (arf) {Arf numerical semigroups};
\node[rnd,below=of arf] (ordinary) {Ordinary semigroups};
\node[rnd,below=of ordinary] (trivial) {Trivial numerical \\ semigroup: $\NNo$};

\draw[ar] (k1sparse) -- (ns);
\draw[ar] (ksparse) -- (k1sparse);
\draw[ar] (sparse) -- (ksparse);
\draw[ar] (arf) -- (sparse);
\draw[ar] (ordinary) -- (arf);
\draw[ar] (trivial) -- (ordinary);
\draw[ar] (purek1) -- (k1sparse);
\draw[ar] (purek) -- (ksparse);

\end{tikzpicture}
\caption{Diagram of $\kappa$-sparse numerical semigroups}
\label{fig1}
\end{figure}


    \bibliography{mybibfile}

\end{document}